\theoremstyle{plain}
\declaretheorem{theorem}
\numberwithin{theorem}{section}
\newtheorem{lemma}[theorem]{Lemma}
\newtheorem{conj}{Conjecture}
\newcommand{\N}{\mathbb{N}}
\definecolor{blue}{RGB}{0,83,159}
\definecolor{black}{RGB}{0,0,0}
\definecolor{white}{RGB}{255,255,255}
\definecolor{lightblue}{RGB}{142,186,226}
\definecolor{grey}{RGB}{51,51,51}
\definecolor{lightgrey}{RGB}{204,204,204}
\definecolor{superlightgrey}{RGB}{247,247,247}
\definecolor{petrol}{RGB}{0,97,101}
\definecolor{teal}{RGB}{0,152,161}
\definecolor{maygreen}{RGB}{189,205,0}
\definecolor{green}{RGB}{87,171,39}
\definecolor{yellow}{RGB}{255,237,0}
\definecolor{orange}{RGB}{246,168,0}
\definecolor{magenta}{RGB}{227,0,102}
\definecolor{red}{RGB}{204,7,30}
\definecolor{bordeaux}{RGB}{161,16,53}
\definecolor{violet}{RGB}{97,33,88}
\definecolor{purple}{RGB}{122,111,172}
\definecolor{darkgreen}{rgb}{0.2,0.7,0.2}
\definecolor{halfgray}{gray}{0.65}
\definecolor{grey}{RGB}{51,51,51}
\definecolor{lightgrey}{RGB}{204,204,204}
\begin{document}
\title[A counterexample to Montgomery's conjecture]{A counterexample to Montgomery's conjecture on dynamic colourings of regular graphs}   
\author[Bowler, Erde,  Lehner, Merker, Pitz \& Stavropoulos]{Nathan Bowler, Joshua Erde, Florian Lehner, Martin Merker, Max Pitz, Konstantinos Stavropoulos}
\thanks{The second author was supported by the Alexander von Humboldt Foundation.}
\address{University of Hamburg, Department of Mathematics, Bundesstra{\ss}e 55 (Geomatikum), 20146 Hamburg, Germany}
\email{nathan.bowler@uni-hamburg.de, joshua.erde@uni-hamburg.de,\newline florian.lehner@uni-hamburg.de, martin.merker@uni-hamburg.de, max.pitz@uni-hamburg.de, konstantinos.stavropoulos@uni-hamburg.de}

\begin{abstract}
A \emph{dynamic colouring} of a graph is a proper colouring in which no neighbourhood of a non-leaf vertex is monochromatic. The \emph{dynamic colouring number} $\chi_2(G)$ of a graph $G$ is the least number of colours needed for a dynamic colouring of $G$. 

Montgomery conjectured that $\chi_2(G) \leq \chi(G) + 2$ for all regular graphs $G$, which would significantly improve the best current upper bound $\chi_2(G) \leq 2\chi(G)$. In this note, however, we show that this last upper bound is sharp by constructing, for every integer $n \geq 2$, a regular graph $G$ with $\chi(G) = n$ but $\chi_2(G) = 2n$. In particular, this disproves Montgomery's conjecture.
\end{abstract}

\maketitle
\section{Introduction}
A \emph{proper $n$-colouring} of a graph $G$ is a map $c \colon V(G) \rightarrow [n]$ such that there is no edge $uv$ with $c(u)=c(v)$. An \emph{$r$-dynamic $n$-colouring} is a proper $n$-colouring $c$ of $G$ such that for each vertex $v \in V(G)$ at least $\min \{ r,d(v) \}$ colours are used on $N(v)$. The \emph{$r$-dynamic chromatic number} $\chi_r(G)$, introduced by Montgomery \cite{M01}, is the smallest $n$ such that $G$ has an $r$-dynamic $n$-colouring. 
Problems on $r$-dynamic colourings of graphs are an active area of research in graph theory, with many recent papers investigating properties of $r$-dynamic colourings, e.g.\ \cite{AGJ09,CFLSS12,KMW15,KLO16,LMP03}. 

Of particular interest is the 2-dynamic chromatic number, which is often simply called the \emph{dynamic chromatic number}.
Note that $\chi_2(G) - \chi(G)$ can be arbitrarily large, which can be seen by considering subdivisions of complete graphs. However, in the case of regular graphs, Montgomery \cite{M01} conjectured the following:

\begin{conj}[Montgomery's Conjecture]\label{c:mont}
If $G$ is a regular graph, then $\chi_2(G) \leq \chi(G) + 2$.
\end{conj}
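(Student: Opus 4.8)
The plan is to start from an optimal proper colouring and repair it using exactly two additional colours, exploiting regularity to keep the repair under control; this would improve the known bound $\chi_2(G)\le 2\chi(G)$ all the way to $\chi(G)+2$. Fix a proper colouring $c\colon V(G)\to[k]$ with $k=\chi(G)$. Since $G$ is $d$-regular we may assume $d\ge 2$ (for $d\le 1$ the statement is immediate), so every vertex is a non-leaf and the dynamic requirement collapses to the single condition that no neighbourhood $N(v)$ is monochromatic. Introduce two fresh colours $k+1,k+2$. Because these colours do not occur in $c$, recolouring any set of vertices into classes $C_{k+1},C_{k+2}$ preserves properness provided each of $C_{k+1},C_{k+2}$ is independent. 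Hence the whole problem reduces to: choose disjoint independent sets $I_1,I_2\subseteq V(G)$ and set $I_1\to k+1$, $I_2\to k+2$ so that the modified colouring $c'$ is dynamic.

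Next I would pin down exactly what $I_1,I_2$ must achieve. Call $v$ \emph{bad} if $N(v)$ is monochromatic under $c$, and let $B$ be the set of bad vertices; for $v\in B$ all $d$ neighbours share a single colour $j\ne c(v)$. Then $c'$ is dynamic precisely when (i) every $v\in B$ has a neighbour in $I_1\cup I_2$, so $N(v)$ acquires a second colour, and (ii) the recolouring never drives some $N(w)$ into a single colour class — since only new colours are introduced, this can happen only when all of $w$'s neighbours are recoloured into the \emph{same} new class. Having two spare colours is exactly the tool for avoiding (ii): for each bad vertex one tries to place its recoloured neighbours into \emph{different} new classes, so that $N(v)$ comes to see both $k+1$ and $k+2$ rather than a single new colour. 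Regularity enters here as the uniformity one would feed into a matching or Lov\'asz-Local-Lemma argument, assigning to each bad vertex a private neighbour to recolour and then $2$-colouring the chosen vertices.

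The hard part will be the \emph{global consistency} of this repair. The two independent sets are badly overloaded: they must simultaneously break every bad neighbourhood, stay independent, keep their own recoloured vertices non-monochromatic, and never collapse any third neighbourhood. There is no a priori reason two colour classes suffice for all vertices at once; the real danger is a dense, interlocking cluster of bad vertices whose neighbourhoods are forced by the structure of $c$ to draw from one and the same old colour class, so that two spare colours cannot diversify all of them without pushing some neighbourhood back to monochromatic. To close the argument I would try to prove a structural lemma asserting that $d$-regularity forbids such rigid clusters — for instance that one can always reroute the choice of recoloured neighbours along augmenting paths in the auxiliary bipartite incidence graph between bad vertices and their neighbours, making every conflict resolvable with just two colours. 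Whether $d$-regularity actually yields such a lemma is the crux, and it is precisely here that I expect the proof either to go through or to break down.
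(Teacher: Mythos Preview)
The conjecture you are trying to prove is \emph{false}; the paper in which this statement appears does not prove it but rather disproves it, by constructing for every $n\ge 2$ a regular graph $G_n$ with $\chi(G_n)=n$ and $\chi_2(G_n)=2n$. So your proof cannot succeed, and the interesting question is exactly where it breaks.

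It breaks at the point you yourself flagged: the hoped-for ``structural lemma'' asserting that regularity forbids rigid interlocking clusters of bad vertices does not hold. The paper's construction is precisely such a rigid cluster. Take $m$ disjoint complete $n$-partite graphs $G_1,\dots,G_m$ with parts $Y_{i,k}$, and for every $i\in[n]$ and every $n$-subset $X\subseteq[m]$ add a vertex $s_{i,X}$ joined to $\bigcup_{k\in X}Y_{i,k}$; with the right part sizes this is regular. Under the natural $n$-colouring every $s_{i,X}$ is bad, its neighbourhood monochromatic of colour $i$. Your repair scheme would now try to pick two independent sets $I_1,I_2$ so that each $s_{i,X}$ sees both new colours. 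But any proper colouring of $G$ with at most $2n-1$ colours --- in particular your $(n+2)$-colouring once $n\ge 3$ --- still leaves some $s_{i,X}$ with a monochromatic neighbourhood: in each $G_k$ one part $Y_{i_k,k}$ receives only one colour, and a pigeonhole over $(i_k,\text{colour})$ finds $n$ indices $k$ sharing the same pair, whence the corresponding $s_{i,X}$ sees only that one colour. Thus no choice of $I_1,I_2$ can satisfy your conditions (i) and (ii) simultaneously; the auxiliary matching/augmenting-path argument you envisage is blocked not by local obstructions but by sheer cardinality.
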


Ahadi, Akbari, Deghan, and Ghanbari \cite{AADG12} conjectured further that if $\chi(G) \geq 4$ then $\chi(G) = \chi_2(G)$. However Alishahi \cite{A12} disproved the stronger conjecture by constructing graphs $G$ with $\chi(G)=n$ such that $\chi_2(G) \geq \chi(G) +1$ for each $n \geq 2$. 

There are several results showing that Conjecture \ref{c:mont} holds for certain classes of regular graphs. For example, Montgomery \cite{M01} showed that Conjecture~\ref{c:mont} holds for claw-free graphs, and Akbari, Ghanbari and Jahanbekam \cite{AGJ10} showed that Conjecture~\ref{c:mont} holds for $d$-regular bipartite graphs with $d \geq 4$. Alishahi \cite{A12} verified Conjecture~\ref{c:mont} if $G$ has diameter at most $2$ and chromatic number at least $4$.

There are also results bounding the dynamic chromatic number of regular graphs in terms of other invariants of the graph. For example, Dehghan, and Ahadi \cite{DA12} showed that if $G$ is regular, then $\chi_2(G) \leq \chi(G) + 2\log{\alpha(G)} +3$, where $\alpha$ is the independence number of the graph. Alishahi \cite{A11} showed that if $G$ is $d$-regular, then $\chi_2(G) \leq \chi(G) + 14.06\log{d} + 1$, and Taherkhani \cite{T16} improved this bound to  $\chi_2(G) \leq \chi(G) + 5.437\log{d} + 2$. The best general bound for $\chi_2(G)$ depending only on $\chi(G)$ was given by Alishahi \cite{A11} who showed that $\chi_2(G) \leq 2 \chi(G)$ for all regular graphs $G$. 

in this paper we show that for every value of $\chi(G)$ there exists a graph for which this inequality is sharp, giving an infinite family of counterexamples to Montgomery's Conjecture. 

\begin{theorem}\label{t:main1}
For every natural number $n\geq 2$, there exists a regular graph $G_n$ with $\chi(G_n) = n$ and $\chi_2(G_n)=2 \chi (G_n)$.
\end{theorem}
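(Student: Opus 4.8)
The plan is to construct, for each $n\geq 2$, a regular graph $G_n$ that is $n$-chromatic but whose every proper colouring with fewer than $2n$ colours is forced to create a monochromatic neighbourhood at some non-leaf vertex. The natural building block is $K_{n,n}$: it is $n$-regular (each side has $n$ vertices), bipartite hence $\chi = 2 \leq n$, and the key point is that in any proper $2$-colouring the two sides are each monochromatic, so the colouring is very far from dynamic. To boost the chromatic number up to exactly $n$ while keeping the graph regular and keeping this rigidity, I would take disjoint copies of $K_{n,n}$ and overlay extra structure. A clean approach: start from a graph $H$ on the ``sides'' that forces $n$ colours --- e.g.\ index the sides by the edges/vertices of $K_{n+1}$ or a similar gadget --- and then attach to each vertex of each $K_{n,n}$ an edge (or a small pendant gadget) running to the appropriate place in $H$, arranged so that the whole graph becomes $d$-regular for a single $d$. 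The role of the $K_{n,n}$'s is to supply the obstruction: any proper colouring using at most $2n-1$ colours must, on at least one copy of $K_{n,n}$, use at most $n-1$ colours on one side but then, since the sides of $K_{n,n}$ are colour-disjoint and together see at most $2n-1$ colours, one side has a vertex whose entire neighbourhood (the other side) uses colours from a set of size $\leq n-1 < n$...

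\medskip

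wait --- that alone isn't a contradiction, so the construction must be tighter. Let me re-plan. The real mechanism I expect the authors to use: take a single copy of $K_{n,n}$ with sides $A$ and $B$. In \emph{any} proper colouring, $A$ and $B$ use disjoint colour sets; if the total number of colours is $2n-1$, then one of $A, B$ uses at most $n-1$ colours, say $A$ uses colours from a set $S$ with $|S| \le n-1$. Then every vertex of $B$ has all of $A$ as its neighbourhood, so its neighbourhood uses $\le n-1 < n$ colours --- so if we want the colouring to be $n$-dynamic (i.e., in the $\chi_2$ sense we only need $2$, so again this is not yet it). So the construction cannot just be a bipartite graph: we need $\chi(G_n) = n$ to make the dynamic requirement at each vertex demand $\min\{2, d\} = 2$ colours on each neighbourhood --- so actually the target $\chi_2 = 2n$ is a statement about needing many colours \emph{globally}, not locally. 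Thus the graph must combine (i) a part forcing $\chi = n$, and (ii) many $K_{n,n}$-type gadgets so placed that, across all of them simultaneously, no global assignment of $\le 2n-1$ colours can be proper-and-dynamic; a counting/pigeonhole over the gadgets, using that each $K_{n,n}$ eats up a ``colour budget'' of $n$ on each side and the dynamic condition at the gadget vertices forces the two sides' colour sets to interact with the rest of the graph in an over-constrained way, should close it.

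\medskip

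Concretely, here is the construction I would try to make work. Fix $n \geq 2$. Let the ``core'' be a graph $K$ that is $n$-chromatic and $(n-1)$-regular or has small maximum degree --- for instance $K_n$ itself, or $C_5$ when $n=3$, more generally a suitable vertex-transitive $n$-chromatic graph. For every pair $(u, \phi)$ where $u$ is a vertex of the core and $\phi$ ranges over a set of ``local roles'', introduce a copy of $K_{n,n}$ and join one distinguished vertex of each side to $u$ (and possibly to a second core vertex) by an edge. Choose the number of gadgets and the attachment pattern so that: every core vertex receives the same number of pendant edges (making $G_n$ regular after also regularising the gadget vertices, e.g.\ by adding a perfect matching inside each side of each $K_{n,n}$ or by using $K_{n,n}$ minus a perfect matching plus the pendant edge), $\chi(G_n) = n$ (the core forces $\ge n$; a colouring achieving $n$ exists by colouring each $K_{n,n}$ with colours split $\{1,\dots,k\}\cup\{k+1,\dots,n\}$ avoiding the colour of the attachment point), and finally that in any proper colouring with colour set of size $\le 2n-1$ one can run a pigeonhole argument over the many gadgets attached to a fixed core vertex $u$: the colour of $u$ is forbidden from the attachment vertices, the two sides of each gadget split the remaining budget, and with enough gadgets two of them are ``coloured the same way'', forcing a monochromatic neighbourhood at an attachment vertex (whose degree in $G_n$ is $n$, so it must see $\ge 2$ colours --- oh, only $2$ again).

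\medskip

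I now see that the crux --- and \textbf{the main obstacle} --- is engineering the graph so that the $\min\{2,d(v)\}=2$ dynamic condition, which is individually weak, aggregates into a global demand for $2n$ colours. The resolution the authors must use is to build $G_n$ so that being \emph{properly} $(2n-1)$-coloured already forces, on some $K_{n,n}$ gadget, one side to be monochromatic in a single colour $c$, while the dynamic condition at the neighbours of that side (which lie in the core or in further gadgets) forces colour $c$ to reappear in a way that violates properness or the dynamic condition elsewhere; iterating propagates a contradiction through the whole graph. So my concrete plan is: (1) define $G_n$ as core $\oplus$ gadgets with the regularity bookkeeping done via adding matchings; (2) prove $\chi(G_n) = n$ by exhibiting an explicit $n$-colouring and noting the core forces $n$; (3) prove $\chi_2(G_n) \le 2n$ either from the general bound $\chi_2 \le 2\chi$ quoted above (so this direction is free!) --- indeed by Alishahi's theorem $\chi_2(G_n)\le 2\chi(G_n)=2n$, so \emph{only the lower bound needs work}; (4) prove $\chi_2(G_n) \ge 2n$: assume a dynamic colouring with $\le 2n-1$ colours, locate a ``bad'' $K_{n,n}$ gadget via pigeonhole on the colour classes, deduce one side is monochromatic, and derive a contradiction from the dynamic condition at the far endpoints of the pendant edges, chasing it around the core until the colour budget is exhausted. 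Step (4), and in particular choosing the attachment pattern so that this propagation is forced and cannot be broken by a clever re-colouring, is where essentially all the difficulty lies; getting the regularisation matchings not to interfere with the argument is the fiddly secondary point.
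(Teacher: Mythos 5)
There is a genuine gap: what you have written is a sequence of candidate plans, and you yourself defer the entire content of the theorem (the lower bound $\chi_2(G_n)\geq 2n$) to an unspecified ``step (4)'' with an unspecified attachment pattern. No construction is actually pinned down, so nothing is proved. Moreover, the architecture you converge on (an $n$-chromatic core plus $K_{n,n}$ gadgets attached by pendant edges, with a contradiction ``propagating'' around the core) has a concrete flaw that you brushed against but did not resolve: a single bipartite gadget with $2n-1\geq 3$ colours available is not rigid at all, since each side can use two or more colours (say side $A$ uses $\{1,2\}$ and side $B$ uses $\{3,4\}$), and then every gadget vertex already sees two colours and the $2$-dynamic condition is satisfied locally with no constraint exported to the core. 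So neither a single gadget nor a pigeonhole over many such gadgets forces a monochromatic neighbourhood, and there is no mechanism by which properness with $2n-1$ colours ``forces one side to be monochromatic'' as your step (4) requires.

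The paper's construction repairs exactly this in two ways that are absent from your proposal. First, the blocks are complete $n$-partite graphs (with large parts), not $K_{n,n}$: since the parts are completely joined, the $n$ parts of a block use pairwise disjoint colour sets, so with only $2n-1$ colours in total some part of \emph{every} block is monochromatic. These blocks also force $\chi\geq n$ by themselves, so no separate core is needed. Second, instead of pendant edges into a core, one adds a supplementary vertex $s_{i,X}$ for every index $i\in[n]$ and every $n$-element set $X$ of blocks, adjacent to \emph{all} of part $i$ in each block of $X$; taking $m\geq (2n-1)n^2$ blocks, a pigeonhole over blocks yields $n$ blocks whose monochromatic part has the same index $i$ and the same colour, and then the corresponding $s_{i,X}$ has a fully monochromatic neighbourhood, killing the $2$-dynamic condition outright --- no propagation or recolouring analysis is needed. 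Regularity is then arranged by choosing the part size $N=\binom{m-1}{n-1}$ so that every block vertex has exactly $N$ supplementary neighbours and every vertex has degree $nN$, rather than by ad hoc matchings; and $\chi=n$ and $\chi_2\leq 2n$ are witnessed by explicit colourings. Your observation that the upper bound is free from $\chi_2\leq 2\chi$ is correct, but the lower bound is the theorem, and your proposal does not contain it.
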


In general, the $r$-dynamic chromatic number of regular graphs cannot be bounded by $r\chi_r(G)$ as was shown by
Jahanbekam, Kim, O, and West \cite{JKSW16}. They showed that if $d=r$ then for infinitely many values of $r$ there exists a $d$-regular graph $G$ such that $\chi_r(G) \geq r^{1.377}\chi(G)$.
However, they also showed that $\chi_r(G) \leq r \chi(G)$ for $d\geq \left(3 + o(1) \right) r \log{r}$. 
Here we show that this bound is sharp even for arbitrarily large values of $d$.

\begin{theorem}\label{t:main2}
For all natural numbers $r,n,\delta \geq 2$, there exists a $d$-regular graph $G=G(r,n,\delta)$ with $d> \delta$, $\chi(G) = n$ and $\chi_r(G) = r \chi(G)$.
\end{theorem}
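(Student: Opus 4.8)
The plan is to generalise the construction behind Theorem~\ref{t:main1}. That construction has a two-level shape: an outer skeleton that forces the chromatic number to equal $n$, together with $n$ inner ``gadgets'' which, through the $2$-dynamic condition, force two new colours to appear in each of $n$ mutually colour-separated regions. I would carry the same idea through with $2$ replaced by $r$, and with an extra blow-up parameter introduced so that the resulting graph is regular of degree larger than $\delta$. For $n=2$ the construction is already the complete bipartite graph $K_{N,N}$ with $N\ge\max\{r,\delta+1\}$: it is $N$-regular and $2$-chromatic, every $r$-dynamic colouring uses at least $r$ colours on each side (each vertex of one side sees the whole of the other, whose palette hence carries at least $\min\{r,N\}=r$ colours), and the two sides are completely joined so their palettes are disjoint, giving $\chi_r(K_{N,N})=2r$. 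The task is to stack $n$ copies of this mechanism.

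Concretely I would fix $N=N(r,\delta)\ge\max\{r,\delta+1\}$, use $K_{N,N}$ as the basic gadget, and build $G=G(r,n,\delta)$ by attaching $n$ such gadgets to a skeleton so that: (a) in every proper colouring the colour classes fall into $n$ groups, one per region, whose palettes are pairwise disjoint; (b) $G$ has a proper $n$-colouring that is as economical as possible on each gadget subject to (a), so $\chi(G)\le n$; (c) $G$ is $d$-regular for some $d>\delta$ — the parameter $N$, together if necessary with padding each region by further independent vertices wired to a regular auxiliary graph, gives enough freedom to equalise all degrees while keeping $d$ as large as we like. Then (a) and (b) give $\chi(G)=n$; (a) together with the gadget property forces every $r$-dynamic colouring to use at least $r$ colours in each of the $n$ disjoint regional palettes, hence $\chi_r(G)\ge rn$; and the matching bound $\chi_r(G)\le rn=r\chi(G)$ either follows from the result of Jahanbekam, Kim, O and West \cite{JKSW16} quoted above, since we are free to take $d\ge(3+o(1))r\log r$, or is obtained directly by refining the $n$-colouring of (b) into $rn$ colours and checking the $r$-dynamic condition gadget by gadget. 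The degrees and the inequality $d>\delta$ are then read off from the construction.

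The main obstacle is arranging (a) compatibly with $\chi(G)=n$. ``Pairwise disjoint regional palettes in every proper colouring'' together with ``each region $r$-dynamically needs $r$ colours'' cannot simply be engineered by forcing each region to contain a $K_{N,N}$, since that would already push the chromatic number up to $2n$; the ``$r$ colours forced'' phenomenon must be genuinely a property of $r$-dynamic colourings and not of proper colourings (so, for instance, the two sides of each $K_{N,N}$ should be made to lie in \emph{different} regions), while at the same time the inter-region wiring must not hand any vertex enough ``free'' colour-diversity from its neighbourhood to meet its $r$-dynamic constraint for nothing — which is exactly what would go wrong if the $n$ regions were simply completely joined to one another. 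Finding inter-region edges that force the $n$ regional palettes apart without either inflating $\chi$ or creating such free diversity, and doing so while keeping the graph regular of arbitrarily large degree, is the heart of the argument; the remaining verifications then run parallel to those for Theorem~\ref{t:main1}.
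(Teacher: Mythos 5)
Your proposal is correct only for $n=2$ (the observation that $\chi_r(K_{N,N})=2r$ while $\chi(K_{N,N})=2$ is fine); for $n\geq 3$ it is a list of desiderata rather than a construction, and you say yourself that finding the inter-region wiring is ``the heart of the argument''. Worse, the guiding property (a) is provably unattainable, so the plan cannot be completed as stated: for \emph{any} two non-adjacent vertices $u,v$ there is a proper colouring in which they share a colour (take any proper colouring and recolour both $u$ and $v$ with a fresh colour), so demanding pairwise disjoint regional palettes in \emph{every} proper colouring forces every cross-region pair to be adjacent, i.e.\ $G$ is the join of its $n$ regions. Together with (b) this makes $\chi(G)=\sum_i\chi(G[R_i])=n$, so each region is independent and $G$ is complete $n$-partite --- and there the mechanism collapses: for $n>r$ the proper $n$-colouring is already $r$-dynamic, and in general one can satisfy the $r$-dynamic condition with roughly $rn/(n-1)<rn$ colours by giving each part about $r/(n-1)$ colours. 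This is exactly the ``free diversity'' failure you anticipate, but it is not a difficulty to be engineered around within your framework; the disjoint-palette requirement itself has to be abandoned.

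The paper's construction takes a genuinely different route, in which no global palette separation is forced at all. Let $m=\max\{\binom{rn-1}{r-1}n^2,\delta\}$ and $N=\binom{m-1}{n-1}$, take $m$ \emph{disjoint} complete $n$-partite graphs $G_1,\dots,G_m$ with parts $Y_{i,k}$ ($i\in[n]$) of size $N$, and add probe vertices $s_{i,X}$ for every $i\in[n]$ and every $n$-subset $X\subseteq[m]$, with $s_{i,X}$ joined to all of $\bigcup_{k\in X}Y_{i,k}$. The graph is $nN$-regular and $n$-chromatic, and an explicit colouring shows $\chi_r\leq rn$. The lower bound is a pigeonhole argument, not a palette-partition argument: in any proper colouring with at most $rn-1$ colours, each copy $G_k$ has some part index $i_k$ whose vertices use a set $C_k$ of at most $r-1$ colours; since $m>\binom{rn-1}{r-1}n(n-1)$, some $n$ indices $k$ agree on the pair $(i,C)$, and then the probe $s_{i,X}$ sees at most $r-1$ colours in its neighbourhood. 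Because each probe's neighbourhood is a union of single parts of disjoint copies, the probes hand out no free colour diversity, and they are trivially accommodated in a proper $n$-colouring --- resolving precisely the tension your plan leaves open. (Your fallback of citing Jahanbekam--Kim--O--West for the upper bound is unnecessary: the $rn$-colouring is exhibited directly.)
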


The dynamic chromatic number also has connections to the chromatic number of total dominating sets, which was observed by Alishahi \cite{A11}. We say a set $D \subset V(G)$ is a \emph{total dominating set} of $G$ if every vertex in $V(G)$ is adjacent to some vertex of $D$. Let us write 
\[
\gamma(G):= \min \{ \chi(G[D]) \,: \, D \text{ a total dominating set of } G \}.
\]
In the 1970s, Berge (unpublished, see \cite{CH76}) and independently Payan \cite{P74} conjectured that every regular graph contains two disjoint maximal independent sets. Payan \cite{P77,P78} showed that this is equivalent to $\gamma(G) = 2$ for every regular graph $G$ and disproved the conjecture by constructing regular graphs $G_p$ with $\gamma(G_p) > p$ for every $p\in \N$. His graphs have the property that $\chi(G_p)=4p+1$ and $\gamma(G_p)=p+1$. The graphs we construct here are new examples showing that $\gamma(G)$ is unbounded for regular graphs. Moreover, our graphs have the even stronger property that $\chi(G) = \gamma(G)$.

\begin{theorem}
For every natural number $n\geq 2$, there exists a regular graph $G_n$ with $\chi(G_n)=n=\gamma(G_n)$.
\end{theorem}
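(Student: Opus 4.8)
The plan is to show that the graphs $G_n$ already constructed for Theorem~\ref{t:main1} have the property asserted here, so that no new construction is needed. Recall that $G_n$ is regular with $\chi(G_n)=n$ and $\chi_2(G_n)=2n$; in particular $G_n$ has no isolated vertex (indeed every regular graph of positive degree has none), which is all that matters below.

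The upper bound $\gamma(G_n)\le n$ is immediate: taking $D=V(G_n)$ gives a total dominating set with $G_n[D]=G_n$, so $\chi(G_n[D])=\chi(G_n)=n$. (This argument shows $\gamma(G)\le\chi(G)$ for every regular graph $G$, which is exactly why $\chi_2(G)\le 2\chi(G)$ follows from the inequality used in the next step.) So everything reduces to proving the matching lower bound $\gamma(G_n)\ge n$.

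For the lower bound I would invoke the connection between $\chi_2$ and $\gamma$ observed by Alishahi, in the precise form $\chi_2(G)\le \chi(G)+\gamma(G)$ for every regular graph $G$ — this is the refinement of the bound $\chi_2(G)\le 2\chi(G)$ quoted in the introduction, obtained by recolouring a total dominating set $D$ with $\chi(G[D])=\gamma(G)$ using a fresh palette of $\gamma(G)$ colours while keeping a proper $\chi(G)$-colouring on the rest, the colouring on $D$ being chosen so that no vertex sees a monochromatic neighbourhood inside $D$ (equivalently: one produces colour classes $C_1,\dots,C_{\chi(G)+\gamma(G)}$ each of whose complements $V\setminus C_i$ is total dominating). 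Applying this to $G_n$ yields $2n=\chi_2(G_n)\le \chi(G_n)+\gamma(G_n)=n+\gamma(G_n)$, hence $\gamma(G_n)\ge n$. Together with the trivial bound this gives $\gamma(G_n)=n=\chi(G_n)$, which is the claim, and since $n$ ranges over all integers $\ge 2$ it also shows $\gamma$ is unbounded on regular graphs.

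The main obstacle is essentially bookkeeping rather than mathematics: the excerpt only records Alishahi's bound in the weaker shape $\chi_2(G)\le 2\chi(G)$, so for a self-contained treatment one should spell out $\chi_2(G)\le \chi(G)+\gamma(G)$, where the delicate case is that of a vertex all of whose neighbours lie in the recoloured set $D$ — this forces one to choose $D$ and its colouring with some care. If one prefers to sidestep that inequality altogether, an alternative is to revisit the lower-bound argument establishing $\chi_2(G_n)=2n$ in the proof of Theorem~\ref{t:main1} and extract $\gamma(G_n)\ge n$ directly from the structure of $G_n$. Either way the substantive content lies entirely in Theorem~\ref{t:main1}; once that graph is available, the present statement is a short deduction.
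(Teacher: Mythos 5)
Your reduction is fine as far as it goes: $\gamma(G_n)\le\chi(G_n)=n$ is trivial via $D=V(G_n)$, and \emph{if} one had the inequality $\chi_2(G)\le\chi(G)+\gamma(G)$ for regular graphs, then $\gamma(G_n)\ge n$ would indeed follow from $\chi_2(G_n)=2n$. But that inequality is exactly where your proof is missing, and it is not a bookkeeping matter. It is not stated in this paper (only $\chi_2\le 2\chi$ is quoted), and your sketch of it --- recolour a total dominating set $D$ with a fresh palette of $\chi(G[D])$ colours --- breaks in precisely the case you flag, namely a vertex all of whose neighbours lie in $D$; you do not say how $D$ or its colouring is to be chosen to handle this, and nothing in the sketch uses regularity. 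That this case is a genuine obstruction, not a formality, can be seen from subdivided complete graphs (the paper's own example showing $\chi_2-\chi$ is unbounded): a subdivision of $K_m$ has minimum degree $2$, $\chi=2$ and $\gamma=2$ (the graph is bipartite, and every total dominating set induces an edge), yet $\chi_2\ge m$ because the branch vertices must get pairwise distinct colours. So $\chi_2\le\chi+\gamma$ fails badly for graphs with $\delta\ge 2$; any correct proof for regular graphs would have to use regularity essentially, and it is not clear that such a statement is available to cite. (The easy, true form of Alishahi's observation uses the total domination \emph{number}: giving each vertex of a minimum total dominating set its own new colour yields $\chi_2(G)\le\chi(G)+\gamma_t(G)$ when $\delta(G)\ge 2$, which is of no use for lower-bounding $\gamma$ as defined here.)

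The paper avoids all of this with a short direct argument --- the one your fallback alludes to but does not carry out. If $S\subseteq V(G)$ satisfies $\chi(G[S])\le n-1$, then for each $k\in[m]$ the set $S$ must be disjoint from some part $\{v_{i_k,j,k}\colon j\in[N]\}$ of the $k$-th complete $n$-partite block, since otherwise $G[S]$ would contain a $K_n$. As $m>n(n-1)$, the pigeonhole principle gives an $i$ and a set $X\subseteq[m]$ of size $n$ with $i_k=i$ for all $k\in X$; then $s_{i,X}$ has no neighbour in $S$, so $S$ is not a total dominating set. Hence $\gamma(G)\ge n$, and with the trivial upper bound $\gamma(G)=n=\chi(G)$. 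To repair your write-up you should either supply such a direct proof of $\gamma(G_n)\ge n$ from the construction, or give a complete proof (or precise citation) of $\chi_2\le\chi+\gamma$ for regular graphs; as it stands, the lower bound rests on an unproved claim.
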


\section{Construction of the counterexamples}
Our aim is to build, for fixed natural numbers $r,n, \delta \geq 2$, a $d$-regular graph $G:=G(r,n,\delta)$ with $d> \delta$ and $\chi(G)=n$ but $\chi_r(G)=rn$ and $\gamma(G)=n$. The graph $G$ will consist of a disjoint union of many complete $n$-partite graphs, together with some supplementary vertices whose neighbourhoods are carefully chosen subsets of that disjoint union. 

More formally, letting $m := \max \{\binom{rn-1}{r-1}n^2, \delta\}$ and $N := \binom{m - 1}{n-1}$, we take a vertex $v_{i,j,k}$ for each $i \in [n]$, $j \in [N]$ and $k \in [m]$ and we add an edge from $v_{i,j,k}$ to $v_{i',j',k'}$ whenever $i \neq i'$ and $k =k'$. Additionally we take vertices $s_{i,X}$ for every $i \in [n]$ and every $X \subseteq [m]$ of size $n$, and we join $s_{i,X}$ to $v_{i',j,k}$ whenever $i = i'$ and $k \in X$ (see Fig \ref{fig:graph}). We call the graph with these vertices and edges $G$.

\begin{figure}
 \centering

\begin{tikzpicture}[
  dot/.style={draw,fill=black,circle,inner sep=0.6pt}
  ]
\foreach \k /\m in {1/1,2/2,3/k,4/m}{
\begin{scope}[xshift=80*\k-200]
\node at (90:0) {{\footnotesize $G_{\m}$}};
\foreach \n in {1,...,4}{
 \foreach \i in {1,2,3}{
    \node[dot] (\i\n\k) at (\i*360/3+70+8*\n:1.15) {};
  }  
  }
\foreach \n in {1,...,4}{
\foreach \l in {1,...,4}{
  \foreach \j in {2,3}{
   \draw[lightblue] (1\n\k) -- (\j\l\k);
    }
   \draw[lightblue] (2\n\k) -- (3\l\k);
}
}
\foreach \i in {1,2,3}{
\node at (120*\i-30:0.8) {{\tiny $Y_{\i,\m}$}};
}
\end{scope}
}

\node[fill=black,circle,inner sep=1pt,label=below:{\tiny{$s_{2,\{2,k,m\}}$}}] (s2-234) at (-3,-3) {};
\foreach \k in {2,3,4}{
 \foreach \n in {1,...,4}{
\draw[darkgreen] (s2-234) -- (1\n\k);
}
}

\node[fill=black,circle,inner sep=1pt,label=below:{\tiny{$s_{3,\{1,2,k\}}$}}] (s3-123) at (3,-3) {};
\foreach \k in {1,2,3}{
 \foreach \n in {1,...,4}{
\draw[darkgreen] (s3-123) -- (2\n\k);
}
}

\node[fill=black,circle,inner sep=1pt,label=above:{\tiny{$s_{1,\{1,2,m\}}$}}] (s5-12m) at (1,2) {};
\foreach \k in {1,2,4}{
 \foreach \n in {1,...,4}{
\draw[darkgreen] (s5-12m) -- (3\n\k);
}
}

\node[xshift=80] {{\footnotesize $\ldots$}};
\node {{\footnotesize $\ldots$}};

\end{tikzpicture}
\caption{$G$ is composed of $m$ disjoint complete $n$-partite graphs $(G_k)_{k\in [m]}$ each with parts $(Y_{i,k})_{i\in [n]}$ of size $N$, along with vertices $s_{i,X}$ for every $(i,X)\in [n]\times\binom{m}{n}$, where $s_{i,X}$ is connected to every vertex in $\bigcup_{k\in X}Y_{i,k}$.}
\label{fig:graph}
\end{figure}
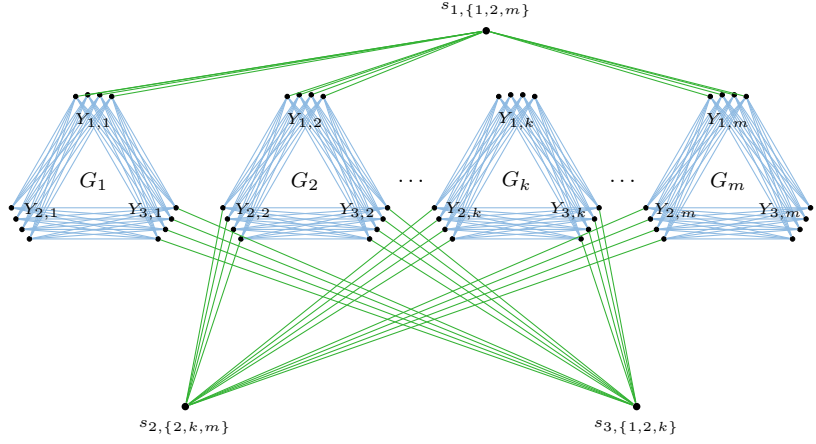

\begin{lemma}
$G$ is $(nN)$-regular.
\end{lemma}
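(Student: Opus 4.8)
The plan is to compute the degree of each vertex directly, splitting the vertex set of $G$ into the two types $v_{i,j,k}$ (with $i\in[n]$, $j\in[N]$, $k\in[m]$) and $s_{i,X}$ (with $i\in[n]$ and $X\in\binom{[m]}{n}$), and checking that in both cases the degree equals $nN$.

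First I would handle a vertex $v_{i,j,k}$. Its neighbours split according to the two edge rules. By the first rule, $v_{i,j,k}$ is adjacent to $v_{i',j',k'}$ exactly when $k'=k$ and $i'\neq i$; there are $n-1$ choices for $i'$ and $N$ choices for $j'$, contributing $(n-1)N$ neighbours. By the second rule, $v_{i,j,k}$ is adjacent to $s_{i',X}$ exactly when $i'=i$ and $k\in X$; these neighbours are indexed by the $n$-subsets $X\subseteq[m]$ containing the fixed element $k$, of which there are $\binom{m-1}{n-1}$. Since $N$ was defined to be exactly $\binom{m-1}{n-1}$, this contributes $N$ further neighbours, so $d(v_{i,j,k})=(n-1)N+N=nN$.

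Next I would handle a vertex $s_{i,X}$. As $G$ has no edges between two $s$-vertices, every neighbour of $s_{i,X}$ has the form $v_{i,j,k}$, and by the second edge rule these are precisely the vertices with $k\in X$. There are $|X|=n$ choices for $k$ and $N$ choices for $j$, so $d(s_{i,X})=nN$ as well, and $G$ is $(nN)$-regular.

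There is no genuine obstacle here; the one point worth flagging is that the equality of the two degree counts rests entirely on the identity $\binom{m-1}{n-1}=N$, which is exactly why $N$ was chosen as it was: it is precisely the number of $s$-neighbours each $v$-vertex needs in order to raise its degree from $(n-1)N$ up to $nN$, matching the degree of every $s$-vertex.
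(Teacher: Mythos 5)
Your proof is correct and follows essentially the same route as the paper: counting $(n-1)N$ neighbours of the form $v_{i',j',k}$ and $\binom{m-1}{n-1}=N$ neighbours of the form $s_{i,X}$ for each $v$-vertex, and $nN$ neighbours for each $s$-vertex. The only difference is that you spell out the (easy) count for the $s_{i,X}$ vertices, which the paper dismisses as clear.
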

\begin{proof}
It is clear that vertices of the form $s_{i,X}$ have degree $nN$. Now we consider the neighbours of a vertex of the form $v_{i,j,k}$. This vertex has $(n-1)N$ neighbours of the form $v_{i', j', k'}$, and it has $N$ neighbours of the form $s_{i,X}$ since there are $\binom{m-1}{n-1}$ subsets $X$ of $[m]$ of size $n$ containing $k$. So in total it has $nN$ neighbours.
\end{proof}

\begin{lemma}
$\chi(G)=n$.
\end{lemma}
\begin{proof}
The chromatic number is at least $n$, since the subgraph on the $v_{i,1,1}$ with $i \in [n]$ is complete. But $G$ can be properly coloured with $n$ colours by assigning each vertex $v_{i,j,k}$ the colour $i$ and each vertex $s_{i,X}$ a colour other than $i$.
\end{proof}

\begin{lemma}
$\chi_r(G)=rn$.
\end{lemma}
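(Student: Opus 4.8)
The plan is to prove $\chi_r(G)\le rn$ and $\chi_r(G)\ge rn$ separately; essentially all of the content is in the lower bound, which is where the specific value $m=\max\{\binom{rn-1}{r-1}n^2,\delta\}$ gets used. For the upper bound I would give an explicit $r$-dynamic colouring with colour set $[rn]$. Partition $[rn]$ into $n$ blocks $B_1,\dots,B_n$, each of size $r$. Noting that $N\ge r$, colour the vertices of each part $Y_{i,k}$ so that all $r$ colours of $B_i$ occur on it, and colour every $s_{i,X}$ with an arbitrary colour outside $B_i$. This is proper, since the only edges of $G$ run between $Y_{i,k}$ and $Y_{i',k}$ with $i\ne i'$ (colours lie in the disjoint blocks $B_i$ and $B_{i'}$) or between $s_{i,X}$ and a vertex of $Y_{i,k}$ (a colour outside $B_i$ against one inside $B_i$). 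It is $r$-dynamic because a vertex $v_{i,j,k}$ already sees all $r$ colours of $B_{i'}$ among its neighbours in $Y_{i',k}$ for any single $i'\ne i$, and a vertex $s_{i,X}$ sees exactly the $r$ colours of $B_i$ on its neighbourhood $\bigcup_{k\in X}Y_{i,k}$; as every vertex has degree $nN\ge r$, this supplies $\min\{r,d(v)\}$ colours at each vertex. (One could instead invoke $\chi_r(G)\le r\chi(G)$ from Jahanbekam, Kim, O and West, valid since $d=nN$ exceeds any threshold, but the direct colouring is shorter.)

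For the lower bound, assume for contradiction that $c$ is an $r$-dynamic colouring of $G$ whose colour set $C$ satisfies $|C|\le rn-1$. Each $s_{i,X}$ has degree $nN\ge r$ and a nonempty neighbourhood, so its neighbourhood must carry at least $r$ colours; in particular $|C|\ge r$. For $i\in[n]$, $k\in[m]$ let $C_{i,k}\subseteq C$ be the set of colours used on $Y_{i,k}$. Since $G_k$ is complete $n$-partite, the sets $C_{1,k},\dots,C_{n,k}$ are pairwise disjoint, so $\sum_{i=1}^n|C_{i,k}|\le|C|\le rn-1$ and hence for each $k$ there is an index $i(k)$ with $|C_{i(k),k}|\le r-1$. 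By the pigeonhole principle applied to the $m$ values of $k$, there is a colour class $i^*$ such that $K:=\{k\in[m]:i(k)=i^*\}$ has size at least $m/n\ge\binom{rn-1}{r-1}\,n$.

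Now enlarge each set $C_{i^*,k}$ with $k\in K$ to a set $D_k\subseteq C$ of size exactly $r-1$. As $C$ has at most $\binom{|C|}{r-1}\le\binom{rn-1}{r-1}$ subsets of size $r-1$, pigeonhole again gives a single set $D$ with $D=D_k$ for at least $n$ distinct indices $k_1,\dots,k_n\in K$. Then $\bigcup_{t=1}^n C_{i^*,k_t}\subseteq D$, so at most $r-1$ colours appear on $\bigcup_{t=1}^n Y_{i^*,k_t}$. But $X:=\{k_1,\dots,k_n\}$ has size $n$, so $s_{i^*,X}$ is a vertex of $G$ with neighbourhood exactly $\bigcup_{t=1}^n Y_{i^*,k_t}$ and degree $nN\ge r$; it therefore sees at most $r-1<\min\{r,nN\}$ colours, contradicting the $r$-dynamic condition. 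Hence $\chi_r(G)\ge rn$, and combined with the upper bound $\chi_r(G)=rn$.

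The main obstacle, and the only step requiring care, is calibrating the two pigeonhole arguments: $m$ must be chosen large enough relative to $n$ and to the number $\binom{rn-1}{r-1}$ of possible ``small'' colour sets that some set $D$ of size $r-1$ is forced to recur on $n$ distinct complete-multipartite blocks lying in a common colour class --- which is precisely what $m=\max\{\binom{rn-1}{r-1}n^2,\delta\}$ guarantees --- while $N=\binom{m-1}{n-1}$ is then dictated by the regularity computation so that $d=nN>\delta$, as demanded in Theorem~\ref{t:main2}.
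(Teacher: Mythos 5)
Your proof is correct and follows essentially the same route as the paper: the explicit upper-bound colouring with $n$ colour blocks of size $r$ per part is the paper's colouring in disguise (its blocks are the residue classes mod $n$), and your lower bound is the paper's pigeonhole argument on the pairs (deficient part index, small colour set), merely split into two successive pigeonhole steps, with the same role for $m\geq\binom{rn-1}{r-1}n^2$. If anything, your explicit enlargement of the colour sets to size exactly $r-1$ makes the counting slightly cleaner than the paper's phrasing.
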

\begin{proof}
We can $r$-dynamically colour $G$ with $rn$ colours by assigning each vertex $v_{i,j,k}$ with $j < r$ the colour $nj + i$, each vertex $v_{i,j,k}$ with $j \geq r$ the colour $i$, and each vertex $s_{i,X}$ a colour which is not congruent to $i$ modulo $n$.

Now consider a proper colouring of $G$ with at most $rn-1$ colours. We shall show that this colouring is not $r$-dynamic. For each $k \in [m]$ and each colour $c$, there is at most one $i \in [n]$ such that for some $j$ the vertex $v_{i, j, k}$ has colour $c$. Since there are only $rn-1$ colours, there must be some $i_k \in [n]$ and some set $C_k$ of colours of size at most $r-1$ such that all $v_{i_k,j,k}$ are coloured with a colour from $C_k$. By the pigeonhole principle, since $m>(n-1)n\binom{rn-1}{r-1}$, there is a set $X \subseteq [m]$ of size $n$ such that all $i_k$ with $k \in X$ take some common value $i$ and all $C_k$ with $k \in X$ take some common value $C$. But then all neighbours of $s_{i,X}$ are coloured with colours from $C$, and so the colouring is not $r$-dynamic.
\end{proof}

\begin{lemma}
$\gamma(G)=n$.
\end{lemma}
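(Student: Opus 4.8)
The plan is to prove the two inequalities $\gamma(G)\le n$ and $\gamma(G)\ge n$ separately. For the upper bound I would exhibit an explicit total dominating set $D$ with $\chi(G[D])\le n$, namely
$D:=\{v_{i,1,k}\colon i\in[n],\,k\in[m]\}$.
This set is totally dominating: any vertex $v_{i,j,k}$ has the neighbour $v_{i',1,k}\in D$ for any $i'\neq i$ (such an $i'$ exists since $n\ge 2$), and any vertex $s_{i,X}$ has the neighbour $v_{i,1,k}\in D$ for any $k\in X$. Moreover $D$ contains no vertex of the form $s_{i,X}$, so the only edges of $G[D]$ are those joining $v_{i,1,k}$ to $v_{i',1,k}$ with $i\neq i'$; hence $G[D]$ is a disjoint union of $m$ copies of $K_n$, one per block $G_k$, and $\chi(G[D])=n$. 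This gives $\gamma(G)\le n$.

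For the lower bound, I would argue by contradiction: suppose some total dominating set $D$ satisfies $\chi(G[D])\le n-1$. The first point is that if $D$ meets all $n$ parts $Y_{1,k},\dots,Y_{n,k}$ of some block $G_k$, then choosing one vertex of $D$ from each part yields a copy of $K_n$ inside $G[D]$ (these vertices pairwise agree in the third coordinate $k$ and pairwise differ in the first), contradicting $\chi(G[D])\le n-1$. Hence for every $k\in[m]$ at least one part of $G_k$ is disjoint from $D$; writing $S_i:=\{k\in[m]\colon D\cap Y_{i,k}\neq\emptyset\}$ and $a_k:=\lvert\{i\in[n]\colon k\in S_i\}\rvert$, this says $a_k\le n-1$ for all $k$.

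The second point is that, since $D$ is totally dominating, every vertex $s_{i,X}$ must have a neighbour in $D$; as $N(s_{i,X})=\bigcup_{k\in X}Y_{i,k}$, this means $X\cap S_i\neq\emptyset$ for every $n$-element subset $X\subseteq[m]$, which forces $\lvert S_i\rvert\ge m-n+1$ for each $i\in[n]$ (otherwise $[m]\setminus S_i$ would contain such an $X$). Double counting the pairs $(i,k)$ with $k\in S_i$ now gives
\[
m(n-1)\ \ge\ \sum_{k\in[m]}a_k\ =\ \sum_{i\in[n]}\lvert S_i\rvert\ \ge\ n(m-n+1),
\]
so $m\le n^2-n$. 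But $m=\max\{\binom{rn-1}{r-1}n^2,\delta\}\ge n^2>n^2-n$, a contradiction. Hence $\gamma(G)\ge n$, and combined with the upper bound $\gamma(G)=n$.

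I expect the only real content to be spotting the two dual facts that drive the counting: that $D$ hitting every part of a single block forces a $K_n$ in $G[D]$ (so $D$ must miss a part in each block), and that dominating all the vertices $s_{i,X}$ forces $D$ to hit part $i$ in all but at most $n-1$ of the $m$ blocks. Once these are in place the contradiction is a one-line double count, and in particular the lower bound uses nothing about $N$, only that $m$ is comfortably larger than $n^2$.
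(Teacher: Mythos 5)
Your proof is correct and follows essentially the same route as the paper: the key steps (a part of each block $G_k$ must be disjoint from the dominating set because of the $K_n$'s, and domination of the vertices $s_{i,X}$ together with $m\geq n^2>n(n-1)$ then yields a contradiction) coincide, with your double count over pairs $(i,k)$ being just the contrapositive packaging of the paper's pigeonhole argument. The only extra content is your explicit total dominating set $\{v_{i,1,k}\}$ for the bound $\gamma(G)\leq n$, which the paper leaves implicit (e.g.\ $V(G)$ itself works since $\chi(G)=n$).
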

\begin{proof}
Consider a set $S\subseteq V(G)$ with $\chi(G[S])\leq n-1$. For each $k \in [m]$ there exists at least one $i_k \in [n]$ such that $S$ contains none of the $v_{i_k,j,k}$. Since $m>(n-1)n$, by the pigeonhole principle, there exists a set $X \subseteq [m]$ of size $n$ such that all $i_k$ with $k \in X$ take some common value $i$. Now none of the neighbours of $s_{i,X}$ are in $S$, so $S$ is not a total dominating set.
\end{proof}

\bibliographystyle{abbrv}
\bibliography{dynamic}

\end{document}